\author{C S Preenu\\University College Thiruvananthapuram\\Research Centre University of Kerala\and A R Rajan\\Former Professor of Mathematics\\University of Kerala\and K S Zeenath \\Former Professor of Mathematics\\School of Distance  Education \\University of Kerala}
\title{Partial Order in the  Normal Category Arising from Normal Bands}
\date{}
\newcommand{\gl}{\ensuremath{\mathscr{L}}} 
\newcommand{\gr}{\ensuremath{\mathscr{R}}}
\newcommand{\lb}{\ensuremath{\mathcal{L}(B)}}
\newcommand{\los}{\ensuremath{\mathcal{L}(S)}}
\theoremstyle{definition}
\newtheorem{thm}{Theorem}[section]
\newtheorem{prop}[thm]{Proposition}
\newtheorem{defn}{Definition}[section]
\begin{document}
	\maketitle
\begin{abstract}
	The notion of normal category was introduced by KSS Nambooripad  in connection with the study of the structure of regular semigroups using cross connections\cite{nambooripad1994theory}. It is an abstraction of the category of principal left ideals of a regular semigroup. A normal band is a semigroup $B$ satisfying $a^2=a$ and $abca=acba$ for all $a,b,c \in B$. Since the normal bands are regular semigroups, the category $\lb$ of principal left ideals of a normal band $B$ is a normal category. One of the special properties of this category is that the morphism sets admit a partial order compatible with the composition of morphisms. In this article we derive several properties of this partial order and obtain a new characterization of this partial order. 
\end{abstract}

\noindent Keywords: Partial order, Normal categories, Normal bands.\\ 
AMS Subject Classification : 20M17
\section{Preliminaries}
In this section we discuss some basic concepts of normal categories,  normal bands and the normal category derived from normal bands. We follow \cite{mac2013categories,nambooripad1994theory} for other notations and definitions which are not mentioned.  
\subsection{Normal Categories}
We begin with the category theory, some basic definitions and notations. In a category $C$, the class of objects is denoted by $vC$ and $C$ itself is used to denote the morphism class. The collection of all morphisms from $a$ to $b$ in $C$ is denoted by $C(a,b)$ and $1_a$ denote the identity morphism in $C(a,a)$. A morphism $f\in C$ is said to be a monomorphism(epimorphism) if $fg=fh$($gf=hf$) implies $g=h$ for any two morphisms $g,h\in C$. A morphism $f\in C(a,b)$ is said to be right(left) invertible if there exists $g\in C(b,a)$ such that $fg=1_a$($gf=1_b$). An isomorphism means a morphism which is both left and right invertible.

Let $S$ be a regular semigroup. A right translation of $S$ is a function  $\phi$ from $S$ to $S$ such that $(xy)\phi=x((y)\phi)$ for all $x,y\in S$. Similarly  a function $\psi:S\to S$ is called a left translation if $\psi(xy)=(\psi(x))y$ for all $x,y\in S$.  For a fixed $u\in S$, the map $\rho_u:x\mapsto xu$ for $ x\in S$ is a right translation. This map can be viewed as a function from $S$ to the principal left ideal $Su$ of $S$. Since $S$ is regular, we can find an idempotent $f\in S$ such that  $Su=Sf$. In particular if we choose $u\in eSf$ the restriction of $\rho_u$ to $Se$, denoted by $\rho(e,u,f)$, can be viewed as a function from $Se$ to $Sf$. If $\rho(e,u,f):Se\to Sf$ and $\rho(g,v,h):Sg\to Sh$ are such that $Sf=Sg$, the composition of these maps will be $\rho(e,uv,h)$. Now we arrive  at a category structure, $\los$, with objects are the principal left ideals of $S$ and morphisms are of the form $\rho(e,u,f):Se\to Sf$ where $u\in eSf$.  The usual set inclusion of the principal left ideals will induce a partial order in the objects class of $\los$. Also  $Se\subseteq Sf$ implies $ef=e$ and hence the morphism $\rho(e,e,f)$ is an inclusion map from $Se$ to $Sf$ and is denoted by $j(Se,Sf)$. The category $\los$  become a normal category whose definition is given below.  

A preorder is a category $P$ such that  each of the hom sets contain at most one morphism.  We write $a\le b$  whenever $P(a,b)\ne \emptyset$. 
\begin{defn}[\cite{rajan2015normal}]
	A category with normal factorization is a small category $C$ with the following properties. 
	\begin{enumerate}
		\item The vertex set $vC$ of $C$ is a partially ordered set such that whenever $a\le b$ in $vC$, there is a monomorphism $j(a,b):a\to b$ in $C$. This morphism is called the inclusion from $a$ to $b$.
		\item $j:(vC,\le)\to C$ is a functor from the preorder  $(vC,\le)$ to $C$ which maps $a\le b $ to $j(a,b)$ for $a,b\in vC$.
		\item For $a,b\le c$ in $vC$, if $$j(a,c)=fj(b,c)$$ for some $f:a\to b$ then $a\le b$ and $f=j(a,b)$.
		\item Every morphism $j(a,b):a\to b$ has a right inverse $q:b\to a$ such that $$j(a,b)q=1_a $$ Such a morphism $q$ is called a retraction in $C$.
		\item Every morphism $f$ in $C$ has a factorization $$f=quj$$ where $q$ is a retraction, $u$ is an isomorphism and $j$ is an inclusion. Such a factorization is called normal factorization in $C$.
	\end{enumerate}
\end{defn}

\begin{prop}[\cite{nambooripad1994theory}]
	Let $C$ is a category with normal factorization. If $f=quj=q'u'j'$ are two normal factorizations of $f\in C$ then $j=j'$ and $qu=q'u'$.

	In this case $f^0=qu$ is called the epimorphic component of $f$. The codomain of $f^\circ$ is called the image of $f$ and is denoted by $\text{im}\,f$  
\end{prop}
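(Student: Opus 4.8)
The plan is to treat this as a uniqueness statement for an epi--inclusion factorization, where the rigidity of the inclusion functor (item~(3) of the definition) supplies uniqueness of the image object and the existence of retractions (item~(4)) supplies the cancellation needed at the end. Write $e=qu$ and $e'=q'u'$, so that the two factorizations read $ej=e'j'$ with $j=j(d,b)$ and $j'=j(d',b)$ inclusions and $e\colon a\to d$, $e'\colon a\to d'$. The first observation I would record is that each of $e,e'$ admits an explicit one-sided inverse: if $q\colon a\to c$ is the retraction associated with the inclusion $j(c,a)$, so that $j(c,a)q=1_c$, and $u\colon c\to d$ is the isomorphism, then $t:=u^{-1}j(c,a)\colon d\to a$ satisfies $te=u^{-1}\bigl(j(c,a)q\bigr)u=1_d$; similarly one builds $t'\colon d'\to a$ with $t'e'=1_{d'}$.

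Next I would pin down the image object. Composing $ej=e'j'$ on the left by $t$ and using $te=1_d$ gives $j=(te')j'$, that is $j(d,b)=(te')\,j(d',b)$ with $te'\colon d\to d'$; since $d,d'\le b$, item~(3) forces $d\le d'$ and $te'=j(d,d')$. Composing instead on the left by $t'$ and using $t'e'=1_{d'}$ gives symmetrically $j(d',b)=(t'e)\,j(d,b)$ with $t'e\colon d'\to d$, whence $d'\le d$. Antisymmetry of the partial order on $vC$ then yields $d=d'$, and consequently $j=j(d,b)=j(d',b)=j'$.

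Finally, with $j=j'$ in hand the relation becomes $ej=e'j$, and it remains to cancel the inclusion on the right. The subtlety here is that $j$ is a monomorphism, not an epimorphism, so it cannot be cancelled from the right directly; this is where I expect the only real care to be needed. The remedy is item~(4): let $p\colon b\to d$ be a retraction of $j$, so that $jp=1_d$. Then $e=e(jp)=(ej)p=(e'j)p=e'(jp)=e'$, i.e.\ $qu=q'u'$, completing the argument. The two ingredients that make everything go through are the explicit sections $t,t'$ manufactured from the isomorphism and the retraction, which convert the factorization equation into the precise shape demanded by item~(3), and the retraction $p$, which performs the final cancellation that monicity alone would not permit.
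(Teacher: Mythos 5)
Your proof is correct. Note first that the paper itself gives no proof of this proposition: it is stated as a quoted result from Nambooripad's \emph{Theory of Cross-connections}, so there is no in-paper argument to compare against. Your argument stands on its own and uses only the axioms of a category with normal factorization, which is exactly what such a proof should do. The two key moves are sound: (i) the section $t=u^{-1}j(c,a)$ satisfies $te=u^{-1}\bigl(j(c,a)q\bigr)u=1_d$ precisely because, in this setting, a retraction $q\colon a\to c$ is \emph{by definition} a right inverse of the inclusion $j(c,a)$; and (ii) pre-composing $ej=e'j'$ with $t$ and $t'$ produces equations of exactly the shape required by axiom~(3), giving $d\le d'$, $d'\le d$, hence $d=d'$ and $j=j'$ by uniqueness of inclusions between a fixed pair of objects. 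The final cancellation via the retraction $p$ of $j$ is also valid. One remark: your caution that ``monicity alone would not permit'' the right cancellation is the safe reading given the paper's stated (and somewhat nonstandard) definition of monomorphism; under the usual convention that inclusions are monic in the standard sense with left-to-right composition, $ej=e'j$ would cancel directly. Using the retraction instead makes the step convention-independent, which is a virtue rather than a defect.
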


Corresponding to a right translation $\rho_u$ from $S$ to $Su$, we get  morphisms from each of the principal left ideals of $S$ to $Su$ by restricting the $\rho_u$. This phenomena  is similar to the concept of cones in the category theory\cite{mac2013categories}. Now we define normal cones  and  normal categories.

\begin{defn}\cite{nambooripad1994theory}
	Let $C$ be a category with normal factorization. A cone $\gamma$ with vertex $a\in vC$ is a function from $vC$ to $C$ satisfying the following
	\begin{enumerate}
		\item $\gamma(c)\in C(c,a)$ for all $c\in vC$
		\item If $c_1\subseteq c_2$ then $\gamma(c_1)=j(c_1,c_2)\gamma(c_2)$
		\item There is an object $b\in vC$ such that $\gamma(b)$ is an isomorphism. 
	\end{enumerate} 
\end{defn}

\begin{defn}[\cite{rajan2018inductive}]
	A normal category is a category with normal factorization such that for each $a\in vC$ there is a normal cone $\gamma$ with $\gamma(a)=1_a$.
\end{defn}

The cones induced by a right translation $\rho_u$ is called a principal cone and is denoted by $\rho^u$. The category $\los$ may contain normal cones other than  principal cones.
\subsection{Normal Bands}
Several studies have been carried out for recent years about the normal categories arising from the semigroups satisfying some special conditions\cite{rajan2015normal,muhammed2016normal,pcscatnormal2019}.  We  focus on the normal category of principal left ideals of a normal band.  A semigroup $B$ is said to be a normal band if $a^2=a$  and $abca=acba$ for all $a,b,c\in B$.   The following theorem, by Yamada and Kimura\cite{kimura1958structure}, states that a semigroup is a normal band if and only if it is a strong semilattice of rectangular bands.

\begin{thm}[\cite{kimura1958structure,yamada1958note}]\label{structure-nband}
	The following conditions are equivalent for a semigroup $B$
	\begin{enumerate}
		\item $B$ is a normal band
		\item There exists a semilattice $\Gamma$, a disjoint family of rectangular bands $\{E_\alpha:\alpha\in\Gamma\}$ such that $B=\bigcup\{E_\alpha:\alpha\in\Gamma\}$ and  a family of homomorphisms  $\{\phi_{\alpha,\beta}:E_\alpha\to E_\beta,\text{ where }\alpha,\beta\in \Gamma\text{ and }\beta\le \alpha \}$ satisfying the following
		\begin{enumerate}
			\item $\phi_{\alpha,\alpha}$ is the identity function,
			\item $\phi_{\alpha, \beta}\phi_{\beta, \gamma}=\phi_{\alpha, \gamma}$ for $\gamma\le\beta\le\alpha$ and 
			\item For $a,b\in B$ with $a\in E_\alpha$ and $b\in E_\beta$
			$$ab=(a\phi_{\alpha,\alpha\beta})(b\phi_{\beta,\alpha\beta}) $$
		\end{enumerate}
	\item $B$ is a band and  
	$$abcd=acbd \text{ for all }a,b,c,d\in B.$$
	\end{enumerate}

\end{thm}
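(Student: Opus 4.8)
The plan is to establish the cycle $(1)\Rightarrow(2)\Rightarrow(3)\Rightarrow(1)$. The implication $(3)\Rightarrow(1)$ is immediate: condition $(3)$ already assumes $B$ is a band, so $a^2=a$, and specialising $abcd=acbd$ at $d=a$ yields $abca=acba$; thus $B$ is a normal band. The genuine content lies in $(1)\Rightarrow(2)$, the construction of the strong-semilattice data, while $(2)\Rightarrow(3)$ is a short computation inside rectangular bands. Throughout I abbreviate the normal-band law as (N): $axya=ayxa$.

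For $(1)\Rightarrow(2)$ I would first import the classical structure theory of bands: every band is a semilattice of rectangular bands, the semilattice being $\Gamma=B/\gd$ and the classes $E_\alpha$ the $\gd$-classes, each of which is a rectangular band (so $E_\alpha E_\beta\subseteq E_{\alpha\beta}$ and $xyx=x$ within each $E_\alpha$). This produces the disjoint decomposition $B=\bigcup\{E_\alpha:\alpha\in\Gamma\}$ required in $(2)$, and it holds for \emph{every} band; normality will be what upgrades this semilattice of rectangular bands to a \emph{strong} one. For $\beta\le\alpha$ (equivalently $\alpha\beta=\beta$) and $a\in E_\alpha$ I define the connecting map by
$$a\phi_{\alpha,\beta}=aea,\qquad e\in E_\beta \text{ arbitrary}.$$
Since $\alpha\beta\alpha=\beta$, the element $aea$ lies in $E_\beta$, so the codomain is correct.

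The main obstacle is to show that this definition does not depend on the chosen $e\in E_\beta$, and it is here that (N) is essential. Given $e,f\in E_\beta$, the rectangular-band identities give $efe=e$ and $fef=f$. Writing $aea=a(efe)a=aefea$ and applying (N) in the form $a(ef)(e)a=a(e)(ef)a$ gives $aefea=aefa$; symmetrically $afa=a(fef)a=afefa=afea$; and $a(e)(f)a=a(f)(e)a$ gives $aefa=afea$. Chaining these,
$$aea=aefa=afea=afa,$$
so $\phi_{\alpha,\beta}$ is well defined. With well-definedness in hand the remaining axioms follow: $\phi_{\alpha,\alpha}$ is the identity because $a\phi_{\alpha,\alpha}=a\cdot a\cdot a=a$; and for $a\in E_\alpha,\ b\in E_\beta$ with $\delta=\alpha\beta$, choosing the admissible witness $e=ab\in E_\delta$ gives $a\phi_{\alpha,\delta}=a(ab)a=aba$ and $b\phi_{\beta,\delta}=b(ab)b=bab$, whence
$$(a\phi_{\alpha,\delta})(b\phi_{\beta,\delta})=(aba)(bab)=(ab)^3=ab,$$
which is precisely the multiplication formula (c). The homomorphism property of each $\phi_{\alpha,\beta}$ and the cocycle condition $\phi_{\alpha,\beta}\phi_{\beta,\gamma}=\phi_{\alpha,\gamma}$ for $\gamma\le\beta\le\alpha$ are proved by the same combination of (N) with the identities $xyx=x$ and $e^2=e$; these are routine once $e$-independence is available, so I would only record them.

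Finally, for $(2)\Rightarrow(3)$, take $a,b,c,d\in B$ lying in classes $E_{\alpha_1},\dots,E_{\alpha_4}$ and put $\delta=\alpha_1\alpha_2\alpha_3\alpha_4$. Using formula (c) together with the fact that the maps $\phi$ are homomorphisms composing correctly by (b), both $abcd$ and $acbd$ are carried into the single rectangular band $E_\delta$, where they equal products of the images of $a,b,c,d$. In a rectangular band any product $xyzw$ collapses to $xw$, so both $abcd$ and $acbd$ reduce to the same element, namely the product of the images of $a$ and $d$; hence $abcd=acbd$, closing the cycle. The crux of the whole argument is thus the $e$-independence of $aea$, i.e.\ the passage from a semilattice of rectangular bands to a strong one.
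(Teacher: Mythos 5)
The paper offers no proof of this theorem to compare against: it is quoted as a classical result of Kimura and Yamada--Kimura, with citations in place of an argument. Judged on its own merits, your sketch is correct and follows the standard route. The implications $(3)\Rightarrow(1)$ and $(2)\Rightarrow(3)$ are handled properly, and you correctly identify the real content of $(1)\Rightarrow(2)$, namely the independence of $a\phi_{\alpha,\beta}=aea$ from the choice of $e\in E_\beta$, which you prove correctly from the normality law (N) and the rectangular identities. Two remarks. First, your argument imports the Clifford--McLean decomposition (every band is a semilattice $B/\mathscr{D}$ of rectangular bands with $E_\alpha E_\beta\subseteq E_{\alpha\beta}$); you flag this explicitly, which is fair, but a self-contained proof would have to include it. Second, the steps you wave off as routine do in fact go through by the same methods, and it is worth recording why: for the homomorphism property, with $a,b\in E_\alpha$ and $e\in E_\beta$, both sides reduce to $aeb$, since
$$(ab)e(ab)=abeab=(abea)b=(aeba)b=ae(bab)=aeb$$
using (N) inside the block $a(be)a$ and $bab=b$ in $E_\alpha$, while
$$(aea)(beb)=a(eabe)b=a(ebae)b=(aeb)(aeb)=aeb$$
using (N) inside the block $e(ab)e$ and idempotency; for the transitivity condition, with $g\in E_\gamma$ and $\gamma\le\beta\le\alpha$, the derived rule $axaya=axya$ (itself a consequence of (N) and $a^2=a$) collapses $(aea)g(aea)=aeagaea$ to $a(eg)a$, which equals $aga$ by your $e$-independence because $eg\in E_\gamma$. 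With those two computations written out, your sketch becomes a complete proof.
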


 \subsection{The category $\lb$}
Let $B$ is a normal band. In addition to that the category $\lb$ is normal, it possess some special qualities which are not common in general. Several properties of the category $\lb$ are described in \cite{pcscatnormal2019}. Here the normal factorization is unique and there is only one isomorphism between any two isomorphic objects. Also the principal cones respect the isomorphic objects in the sense that 
$$\gamma(Ba)=\rho(a,ab,b)\gamma(Bb) $$
for any principal cone $\gamma$ and isomorphic objects $Ba$ and $Bb$. This leads to the definition of strong cones by AR Rajan\cite{rajan2015bands}.
\begin{defn}\cite{rajan2015bands}
	A normal cone $\gamma$ in a normal category $C$ is said to be a strong cone if the following holds
	
	If $c$ and $d$ are two isomorphic objects, then there is a unique  isomorphism $\alpha$ from $c$ to $d$ such that $\gamma(c)=\alpha\gamma(d)$
\end{defn}
Thus every principal cone is a strong cone. That is for every objects $Ba$ of $\lb$ there is a strong cone with vertex $Ba$. The category of principal left ideals of any normal band is   characterized as follows.   
\begin{thm}\label{defthm}\cite{pcscatnormal2019}
	A normal category $C$  satisfies  following conditions if and only if it is isomorphic to $\lb$ for some normal band $B$ 
	\begin{enumerate}[label=\bf SC\arabic*]
		\item\label{sc0} The isomorphism between any two objects is unique.
		\item\label{sc1} The right inverse of an inclusion is unique.
		\item\label{sc2} For every object $c$ in $C$, there is a strong cone with vertex $c$.
	\end{enumerate}
\end{thm}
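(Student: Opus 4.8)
The plan is to prove the two implications separately, spending almost all the effort on the converse. The forward implication is a transfer argument: the discussion preceding the statement records that in $\lb$ the isomorphism between any two objects is unique, that the right inverse of an inclusion is unique, and that every principal cone is a strong cone, so that every object carries a strong cone; these are exactly \ref{sc0}, \ref{sc1} and \ref{sc2}. Since each of these three properties is phrased purely in terms of objects, inclusions, isomorphisms, retractions and cones, each is preserved under an isomorphism of normal categories, and hence any $C$ with $C\cong\lb$ inherits them. For the converse I would work inside the regular semigroup $\tcc$ of all normal cones of $C$, equipped with the cone product whose $x$-component is $(\gamma\cdot\delta)(x)=\gamma(x)\,(\delta(m_\gamma))^\circ$, where $m_\gamma$ denotes the vertex of $\gamma$; the aim is to locate a normal band $B\subseteq\tcc$ with $\lb\cong C$.

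Take $B$ to be the set of all strong cones of $C$. The first key step, and the place where \ref{sc0} does its work, is that \emph{every strong cone is idempotent in} $\tcc$. Indeed, if $\gamma$ is strong with vertex $a$, then the cone axiom furnishes an object $b$ with $\gamma(b)$ an isomorphism, and the strong-cone property applied to the isomorphic pair $b,a$ yields the unique isomorphism $\alpha\colon b\to a$ with $\gamma(b)=\alpha\,\gamma(a)$; therefore $\gamma(a)=\alpha^{-1}\gamma(b)$ is an isomorphism $a\to a$, and by \ref{sc0} it must equal $1_a$. A cone with $\gamma(a)=1_a$ at its vertex satisfies $\gamma\cdot\gamma=\gamma$. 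I would then check that $B$ is closed under the product: for $\gamma$ strong and $\delta$ arbitrary, if $c,d$ are isomorphic and $\theta\colon c\to d$ is the isomorphism witnessing strongness of $\gamma$, then $(\gamma\cdot\delta)(c)=\gamma(c)\,(\delta(m_\gamma))^\circ=\theta\,\gamma(d)\,(\delta(m_\gamma))^\circ=\theta\,(\gamma\cdot\delta)(d)$, and the uniqueness of $\theta$ is again \ref{sc0}, so $\gamma\cdot\delta$ is strong. Thus $B$ is a subsemigroup of $\tcc$ all of whose elements are idempotent, that is, a band.

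It then remains to verify the normal-band identity, for which I would use the equivalent form $\sigma_1\sigma_2\sigma_3\sigma_4=\sigma_1\sigma_3\sigma_2\sigma_4$ from Theorem~\ref{structure-nband}. Expanding each side by repeated use of the product formula reduces the claim to an equality of morphisms of $C$ of the form $\gamma(x)$ post-composed with a string of epimorphic components; using the uniqueness of normal factorization one may replace each epimorphic component by its retraction–isomorphism data, and the required commuting of the two middle factors should then follow from \ref{sc1}, the uniqueness of the right inverse of an inclusion. I expect this componentwise rearrangement to be the main obstacle: keeping track of the successive vertices and images along a fourfold product is delicate, and it is exactly here, rather than in the band axiom, that \ref{sc1} is indispensable. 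Once this identity is established, $B$ is a normal band.

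Finally I would produce the isomorphism $C\cong\lb$. On objects, send $c\in vC$ to the principal left ideal $B\epsilon_c$, where $\epsilon_c$ is a strong cone with vertex $c$ supplied by \ref{sc2}; since two idempotent cones sharing a vertex generate the same principal left ideal, $B\epsilon_c$ does not depend on the choice of $\epsilon_c$, the assignment is injective, and \ref{sc2} makes it surjective onto $v\lb$. On morphisms, a morphism $g\in C(c,c')$ is realised inside the band by composing the strong cones at its endpoints with $g$, producing a morphism of the form $\rho(\epsilon_c,\,\cdot\,,\epsilon_{c'})$ between the corresponding left ideals; one checks, using the uniqueness of normal factorization, that this assignment is functorial, bijective on each hom-set, and carries inclusions to inclusions and retractions to retractions. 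This exhibits an isomorphism of normal categories, completing the converse and hence the theorem.
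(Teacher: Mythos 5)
First, a point of reference: this paper does not actually prove Theorem \ref{defthm} at all --- the result is imported with a citation to \cite{pcscatnormal2019} --- so there is no internal proof to compare your argument against, and it has to stand on its own. Your overall strategy is the natural (and almost certainly the intended) one: the forward direction by transferring the three conditions along an isomorphism of normal categories, and the converse by realizing the band as the set $B$ of strong cones inside the semigroup $\tcc$ of all normal cones and then restricting Nambooripad's reconstruction to obtain $C\cong\lb$. Moreover, the steps you actually carry out are correct: uniqueness of isomorphisms (SC1) does force $\gamma(a)=1_a$ at the vertex $a$ of a strong cone $\gamma$, hence idempotency in $\tcc$, and your computation that $\gamma\cdot\delta$ is again strong when $\gamma$ is strong is valid (under SC1 the uniqueness clause in strongness is automatic), so $B$ is a band closed under the cone product.

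The trouble is that the two verifications you defer are exactly the ones carrying the mathematical weight. First, the normal-band identity $\sigma_1\sigma_2\sigma_3\sigma_4=\sigma_1\sigma_3\sigma_2\sigma_4$ is never established: you expand the fourfold products, observe that the claim amounts to rearranging a string of epimorphic components evaluated at successively shrinking vertices, and then say the rearrangement ``should follow'' from uniqueness of right inverses of inclusions (SC2), conceding that the bookkeeping of vertices and images is the main obstacle. That concession is fatal as a proof: idempotency and closure used only SC1 and hold in much greater generality, so the normality of $B$ --- the only place SC2 can enter --- is precisely the content of the converse, and without it $\lb$ is not even a category of the kind the theorem speaks about. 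Second, the isomorphism $C\cong\lb$ is only gestured at on morphisms: you must exhibit the cone attached to $g\in C(c,c')$ (the unnamed middle entry of your $\rho(\epsilon_c,\cdot,\epsilon_{c'})$), show that it lies in $\epsilon_c B\epsilon_{c'}$, that the assignment is functorial and sends inclusions to inclusions, and --- hardest --- that it is bijective on hom-sets, i.e.\ that every $u\in\epsilon_c B\epsilon_{c'}$ arises from exactly one $g\in C(c,c')$; ``one checks'' does not supply any of this. As it stands, the proposal is a well-oriented plan whose two critical verifications are missing, so it does not yet constitute a proof of the theorem.
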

\section{Partial order in $\lb$}
Let $B$ is a normal band. Let us denote by $[Ba,Bb]$, the collection of all morphisms from $Ba$ to $Bb$ in $\lb$. Now we show that there is a partial order on the morphism sets $[Ba, Bb]$
and that $[Ba, Bb]$ is an order ideal with respect to this partial order. In
particular there is a maximum element in $[Ba, Bb]$. We note that any normal
band $B$ has a natural partial order given by $a\le  b$ if and only if $ab = ba = a$.
\begin{thm}\cite{pcscatnormal2019}
	Let $B$ be a normal band and $\rho(a, u, b)$, $\rho(a, v, b) \in  [Ba, Bb]$ be morphisms in $\lb$. Define 	$$\rho(a, u, b) \le  \rho(a, v, b)\text{ if } u \le v$$ Then $\le$ is a partial order on $[Ba, Bb]$.
\end{thm}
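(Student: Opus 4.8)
The plan is to show that the relation $\le$ on $[Ba,Bb]$ is nothing but the natural partial order of $B$ transported to the hom-set through the correspondence $u\mapsto\rho(a,u,b)$, and then to verify the three order axioms directly from the band identities.

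First I would check that the defining condition is unambiguous, i.e. that a morphism $\rho(a,u,b)\in[Ba,Bb]$ determines its label $u$ uniquely. By construction $u\in aBb$, so writing $u=axb$ and using $a^2=a$ and $b^2=b$ gives $au=u=ub$. Now if $\rho(a,u,b)=\rho(a,v,b)$ as functions on $Ba$, then since $a=aa\in Ba$ we may evaluate at $a$ to get $au=av$, whence $u=au=av=v$. Thus $u\mapsto\rho(a,u,b)$ is a bijection from $aBb$ onto $[Ba,Bb]$, and the expression ``$u\le v$'' on the right-hand side of the definition refers to a well-defined pair of elements of $aBb$. Consequently it suffices to prove that the natural partial order of $B$, restricted to the subset $aBb$, is a partial order; the bijection above then carries it to a partial order on $[Ba,Bb]$.

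Next I would verify reflexivity, antisymmetry and transitivity for $\le$ on $B$ (hence on $aBb$), using only $x^2=x$ and associativity. Reflexivity is immediate, since $uu=uu=u$. For antisymmetry, $u\le v$ and $v\le u$ give $vu=u$ and $vu=v$ respectively, so $u=vu=v$. For transitivity, suppose $u\le v$ and $v\le w$, so that $uv=vu=u$ and $vw=wv=v$; then $uw=(uv)w=u(vw)=uv=u$ and $wu=w(vu)=(wv)u=vu=u$, so $u\le w$.

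Since these properties hold in every band, they persist after restriction to $aBb$, and transporting them along the bijection $u\mapsto\rho(a,u,b)$ shows that $\le$ is a partial order on $[Ba,Bb]$. I do not expect a genuine obstacle: the whole content is the uniqueness of the label $u$, which legitimises the definition, together with the standard fact that the natural order on a band is a partial order. The only point requiring a little care is the well-definedness step, since without the identity $au=u$ one could not rule out two distinct labels representing the same morphism.
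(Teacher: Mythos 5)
The paper does not actually prove this theorem; it is quoted from the reference \cite{pcscatnormal2019} and stated without proof, so there is no internal argument to compare yours against. Judged on its own, your proof is correct and complete. The two ingredients are exactly the right ones: (i) the well-definedness step, showing that the label $u\in aBb$ is recovered from the morphism $\rho(a,u,b)$ by evaluating at $a\in Ba$ (using $au=u$, which follows from $u\in aBb$ and $a^2=a$), so that $u\mapsto\rho(a,u,b)$ is a bijection of $aBb$ onto $[Ba,Bb]$ and the definition ``$\rho(a,u,b)\le\rho(a,v,b)$ iff $u\le v$'' is unambiguous; and (ii) the verification that the natural order $u\le v\iff uv=vu=u$ is reflexive, antisymmetric and transitive on any band, which your computations ($uw=(uv)w=u(vw)=uv=u$ and $wu=w(vu)=(wv)u=vu=u$) establish using only associativity and idempotency. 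Transporting the order along the bijection then gives the claim. It is worth noting that your argument nowhere uses normality of the band ($abca=acba$); the statement holds for the hom-sets $[Ba,Bb]$ of $\mathcal{L}(B)$ for an arbitrary band $B$, so your proof is in fact slightly more general than the stated hypothesis requires.
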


\begin{prop}
	Every morphism set $[Ba,Bb]$ contains a maximum with respect to the above partial order.
\end{prop}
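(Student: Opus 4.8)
The plan is to guess the maximum explicitly and then prove that it dominates every morphism. Recall that a morphism in $[Ba,Bb]$ is of the form $\rho(a,u,b)$ with $u\in aBb$, and that by the preceding theorem $\rho(a,u,b)\le\rho(a,v,b)$ exactly when $u\le v$ in the natural partial order of $B$. Hence producing a maximum of $[Ba,Bb]$ is equivalent to exhibiting an element of $aBb$ that lies above every element of $aBb$ in the natural order. My candidate is the product $ab$, that is, the morphism $\rho(a,ab,b)$.

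First I would confirm that $\rho(a,ab,b)$ really is a morphism of $[Ba,Bb]$, i.e. that $ab\in aBb$. Since $B$ is a band, $a(ab)=ab$ and $(ab)b=ab$, so $ab=a(ab)b\in aBb$. I would also record the convenient description $aBb=\{u\in B:au=u=ub\}$: if $u=axb$ then $au=a^2xb=axb=u$ and $ub=axb^2=u$, while the converse follows from $u=au=a(ub)=aub$. This reduces the problem to a purely algebraic statement in the natural partial order of $B$.

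The main step is to show that $u\le ab$ for every $u\in aBb$, which by definition of the natural partial order means $(ab)u=u(ab)=u$. Writing $u=awb$ for some $w\in B$, I would compute both products by repeatedly invoking the normal band identity $abcd=acbd$ of Theorem \ref{structure-nband} together with idempotency. For instance $(ab)(awb)=abawb=abwb=awb=u$, using the swap $abaw=aabw$ and then $abwb=awbb$; a symmetric sequence of swaps collapses $(awb)(ab)$ to $u$ as well. Once both identities are in hand, $u\le ab$ holds for all $u\in aBb$, so $\rho(a,u,b)\le\rho(a,ab,b)$ for every morphism, and $\rho(a,ab,b)$ is the required maximum. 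I expect the only real obstacle to be the bookkeeping in these word reductions, namely choosing the order of applications of $abcd=acbd$ so that each step genuinely shortens or stabilises the word; no conceptual difficulty arises beyond the defining identities of a normal band and the fact, already established, that $\le$ is a well-defined partial order on $[Ba,Bb]$.
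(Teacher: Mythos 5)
Your proof is correct, but it follows a genuinely different route from the one the paper gives for this proposition. The paper's own proof goes through the structure theorem (Theorem \ref{structure-nband}): it places $a\in E_\alpha$ and $b\in E_\beta$ in the semilattice decomposition, asserts that there is a unique $u\in E_{\alpha\beta}$ with $\rho(a,u,b)\in[Ba,Bb]$, and identifies it with the idempotent $h$ for which $\omega^l(b)\cap\omega^r(a)=\omega(h)$ --- that is, it quotes the fact that the set $\{x\in B: ax=x=xb\}$ is a principal ideal of the natural partial order. That argument is short but leans on structure-theoretic facts about normal bands that the paper does not prove, and on the $\omega$-notation it never defines. You instead name the maximum explicitly as $\rho(a,ab,b)$ and verify $u\le ab$ for every $u\in aBb$ by direct computation from $a^2=a$ and the identity $abcd=acbd$; your sketched reductions do close up correctly, e.g.
$$(ab)(awb)=abawb=aabwb=abwb=awbb=awb,\qquad (awb)(ab)=awbab=awabb=awab=aawb=awb,$$
each step being a middle-swap or an idempotent cancellation. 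This is precisely the computation the paper postpones to Proposition \ref{exact:max}, where the maximum is exactly located as $\rho(a,ab,b)$. So your route is more elementary and self-contained, and it collapses the existence statement and the later, sharper statement into a single argument; what the paper's route buys is that it exhibits the maximum inside the rectangular component $E_{\alpha\beta}$ and ties the proposition to the natural-order ideal structure $\omega^r(a)\cap\omega^l(b)$, which is the standard viewpoint in the structure theory of bands and is used as ambient machinery elsewhere in the paper.
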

\begin{proof}
	Taking $\delta=\alpha\beta$ where $a\in E_\alpha$ and $b\in E_\beta$ we see that there is a unique $u\in E_{\alpha\beta}$ such that $\rho(a,u,b)\in [Ba,Bb]$.
	
	Since $\omega^l(b)\cap \omega^r(a)=\omega(h)$ for some $h\in E_{\alpha\beta}$ and $\rho(a,h,b)\in[Ba,Bb]$ we conclude that $u=h$. It follows that $\rho(a,u,b)$ is the maximum in $[Ba,Bb]$ 
\end{proof}

The following compatibility properties hold for this partial order.
\begin{prop}\label{compatibility}
	Let $\rho(a, u, b)$ and $\rho(a, v, b)$ be elements in the morphism
	set $[Ba, Bb]$ in $\lb$ such that $\rho(a, u, b) \le \rho(a, v, b)$. Then the following compatibility relations hold.
	\begin{enumerate}
		\item If $\rho(b, w, c) \in [Bb, Bc]$ then $\rho(a, u, b)\rho(b, w, c) \le  \rho(a, v, b)\rho(b, w, c)$.
		\item  If $\rho(d, x, a) \in [Bd, Ba]$ then $\rho(d, x, a)\rho(a, u, b)\le  \rho(d, x, a)\rho(a, v, b)$.
	\end{enumerate}
\end{prop}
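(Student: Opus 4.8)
The plan is to reduce this categorical statement to a purely band-theoretic one about the natural partial order on $B$. First I would invoke the composition rule already established for $\los$ (and hence for $\lb$): since the morphisms $\rho(\cdot,\cdot,\cdot)$ come from right translations, composition multiplies the middle entries, so that $\rho(a,u,b)\rho(b,w,c)=\rho(a,uw,c)$ and $\rho(d,x,a)\rho(a,u,b)=\rho(d,xu,b)$. By the definition of the partial order ($\rho(a,u,b)\le\rho(a,v,b)$ iff $u\le v$), claim (1) then becomes the assertion $uw\le vw$ and claim (2) becomes $xu\le xv$, where $\le$ now denotes the natural partial order on $B$ given by $p\le q$ iff $pq=qp=p$. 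Thus it suffices to prove that this natural order is compatible, on both sides, with the band multiplication.

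The heart of the argument is the identity $abcd=acbd$, valid in every normal band by condition (3) of Theorem~\ref{structure-nband}. Assuming $u\le v$, i.e.\ $uv=vu=u$, I would verify right-compatibility by computing both products $(uw)(vw)$ and $(vw)(uw)$, using the identity to bring the repeated letter $w$ together: $uwvw=uvww=uvw=uw$ and $vwuw=vuww=uw$, so that $(uw)(vw)=(vw)(uw)=uw$, which is precisely $uw\le vw$. Left-compatibility is handled symmetrically: $xuxv=xxuv=xuv=xu$ and $xvxu=xxvu=xu$, giving $(xu)(xv)=(xv)(xu)=xu$, that is $xu\le xv$. Combining these with the reduction of the first paragraph yields both stated compatibility relations.

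The only bookkeeping to watch is that the products $uw$ and $xu$ genuinely index morphisms in the relevant hom-sets $[Ba,Bc]$ and $[Bd,Bb]$; but this is guaranteed by the already-established category structure of $\lb$, so no separate check is required. The sole piece of real content is the multiplicative compatibility of the natural partial order, and here the single nontrivial ingredient is the normal-band identity $abcd=acbd$: it is exactly this identity that lets $uwvw$ collapse to $uw$, a step that would fail in a general band. I therefore expect the one-line applications of that identity to be the crux, with the rest being routine substitution of $uv=vu=u$ and idempotency $ww=w$, $xx=x$.
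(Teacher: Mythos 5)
Your proposal is correct and follows essentially the same route as the paper: the paper's proof is the single line ``Follows from the compatibility of natural partial order on normal bands,'' and your argument is exactly that reduction, with the compatibility itself proved from the normal-band identity $abcd=acbd$ rather than merely cited. Your computations $(uw)(vw)=(vw)(uw)=uw$ and $(xu)(xv)=(xv)(xu)=xu$ are valid and supply the details the paper leaves implicit.
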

\begin{proof}
	Follows from the compatibility of natural partial order on normal bands.
\end{proof}

Now we describe some properties of the maximum elements in the morphism sets.

\begin{prop}\label{iso-max}
	Let $m = m(a, b)$ be the maximum element in the morphism set $[Ba, Bb]$ in $\lb$ and $\alpha = \alpha(b, c)$ be an isomorphism from $Bb$ to $Bc$
	in $\lb$. Then $m\alpha$ is the maximum element in the morphism set $[Ba, Bc]$.
\end{prop}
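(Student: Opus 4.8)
The plan is to prove maximality by transferring the problem across the isomorphism $\alpha$, rather than by computing with the band multiplication. First I would note that $m\alpha$ certainly lies in $[Ba,Bc]$, being the composite of $m\colon Ba\to Bb$ with $\alpha\colon Bb\to Bc$, so the only thing in question is maximality. To settle it, I would take an arbitrary morphism $g\in[Ba,Bc]$ and aim to show $g\le m\alpha$.

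Since $\alpha=\alpha(b,c)$ is an isomorphism, it has a two-sided inverse $\alpha^{-1}\colon Bc\to Bb$ (unique by \ref{sc0}) satisfying $\alpha^{-1}\alpha=1_{Bc}$. Then $g\alpha^{-1}$ is a morphism in $[Ba,Bb]$, and because $m=m(a,b)$ is by hypothesis the maximum element of that morphism set, we immediately obtain $g\alpha^{-1}\le m$.

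Next I would invoke the right-hand compatibility of the partial order, Proposition~\ref{compatibility}(1), with the fixed morphism $\alpha\in[Bb,Bc]$. Applying it to $g\alpha^{-1}\le m$ yields $(g\alpha^{-1})\alpha\le m\alpha$. By associativity of composition together with $\alpha^{-1}\alpha=1_{Bc}$, the left-hand side collapses to $g(\alpha^{-1}\alpha)=g\,1_{Bc}=g$, so $g\le m\alpha$. As $g$ was arbitrary, $m\alpha$ is the maximum of $[Ba,Bc]$. Conceptually this shows that right composition with $\alpha$ is an order isomorphism from $[Ba,Bb]$ onto $[Ba,Bc]$, with inverse given by right composition with $\alpha^{-1}$, and such a map necessarily carries the maximum to the maximum.

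I expect the only point needing care to be the verification that the compatibility proposition, which is stated for morphisms written as triples $\rho(a,u,b)$, applies verbatim here. This is unproblematic because every morphism of $\lb$ — in particular $g\alpha^{-1}$, $m$ and $\alpha$ — has the form $\rho(\cdot,\cdot,\cdot)$, and the two comparable morphisms $g\alpha^{-1}$ and $m$ share the common hom-set $[Ba,Bb]$ demanded by the hypothesis of Proposition~\ref{compatibility}. No computation with the band multiplication is required, since the structural ingredients (existence and uniqueness of $\alpha^{-1}$, and the compatibility of $\le$ with composition) are already available from the earlier results.
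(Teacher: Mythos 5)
Your proof is correct and takes essentially the same route as the paper's: given an arbitrary $g\in[Ba,Bc]$, compose with $\alpha^{-1}$ to get $g\alpha^{-1}\le m$ in $[Ba,Bb]$ by maximality, then apply the compatibility of Proposition~\ref{compatibility} on the right with $\alpha$ to conclude $g=g\alpha^{-1}\alpha\le m\alpha$. The only cosmetic difference is your appeal to uniqueness of the isomorphism (condition \ref{sc0}) for $\alpha^{-1}$, which is not needed since inverses are unique in any category; otherwise the arguments coincide.
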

\begin{proof}
	Let  $f \in  [Ba, Bc]$. Then $f\alpha^{-1}\in  [Ba, Bb]$. Since $m$ is maximum element in $[Ba, Bb]$ we have $f\alpha^{-1}\le m$. Now by compatibility given by Proposition \ref{compatibility}
	$$f = f\alpha^{-1} \alpha \le m\alpha$$.
	So $m\alpha$ is the maximum in $[Ba, Bc]$. 
\end{proof}
\begin{prop}
	Let $m = m(a, b)$ be the maximum element in the morphism set $[Ba, Bb]$ in $\lb$ and $j = j(a_0,a)$ be an inclusion in $\lb$. Then
	$jm$ is the maximum element in the morphism set $[Ba_0,Bb]$.
\end{prop}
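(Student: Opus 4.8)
The plan is to follow the pattern of Proposition~\ref{iso-max}, replacing the inverse isomorphism used there by a retraction, since an inclusion is not invertible on both sides. By property~(4) in the definition of a category with normal factorization, the inclusion $j=j(a_0,a)\colon Ba_0\to Ba$ admits a right inverse (retraction) $q\colon Ba\to Ba_0$ with $jq=1_{Ba_0}$; this $q$ will play the role that $\alpha^{-1}$ played in Proposition~\ref{iso-max}.

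First I would note that $jm$ lies in $[Ba_0,Bb]$, since $j\colon Ba_0\to Ba$ and $m\colon Ba\to Bb$ compose to a morphism $Ba_0\to Bb$. It then suffices to show that $jm$ dominates every element of $[Ba_0,Bb]$. So take an arbitrary $f\in[Ba_0,Bb]$ and form $qf$. Because $q\colon Ba\to Ba_0$ and $f\colon Ba_0\to Bb$, we have $qf\in[Ba,Bb]$, and since $m$ is the maximum of $[Ba,Bb]$ we get $qf\le m$. Applying the left compatibility of the partial order (Proposition~\ref{compatibility}(2)) with $j$ on the left yields $j(qf)\le jm$. The key simplification is that $j(qf)=(jq)f=1_{Ba_0}f=f$, so $f\le jm$; as $f$ was arbitrary, $jm$ is the maximum in $[Ba_0,Bb]$.

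The step I expect to be the main obstacle, or at least the point demanding care, is that unlike the isomorphism case only the one-sided identity $jq=1_{Ba_0}$ is available, so the argument must be arranged to use it alone; here this is achieved through the rewriting $f=(jq)f=j(qf)$, which is precisely why $qf$ (rather than some right-hand composite) is the quantity to compare against $m$. Beyond this, one only needs to confirm that Proposition~\ref{compatibility}(2) applies verbatim, taking the left factor $\rho(d,x,a)$ there to be the inclusion $j=\rho(a_0,a_0,a)$ and the two comparable morphisms to be $qf$ and $m$ in $[Ba,Bb]$.
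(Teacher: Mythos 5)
Your proof is correct, but it is not the route the paper takes for this particular proposition. You argue categorically: take the retraction $q\colon Ba\to Ba_0$ with $jq=1_{Ba_0}$ guaranteed by axiom (4) of a category with normal factorization, push an arbitrary $f\in[Ba_0,Bb]$ up to $qf\in[Ba,Bb]$, use maximality to get $qf\le m$, and then left-compatibility (Proposition \ref{compatibility}(2)) together with $j(qf)=(jq)f=f$ to conclude $f\le jm$. This is exactly the template the paper uses for the neighbouring results, Proposition \ref{iso-max} (with $\alpha^{-1}$ in place of $q$) and Proposition \ref{retraction} (with the inclusion $j(b_0,b)$ in place of $q$), and all the ingredients you invoke are available in $\lb$: the retraction exists, associativity gives the rewriting, and the compatibility proposition applies verbatim with $\rho(d,x,a)=\rho(a_0,a_0,a)$. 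The paper's own proof of this proposition is instead a direct semigroup computation: it writes $m=\rho(a,h,b)$ with $\omega(h)=aBb$, computes $jm=\rho(a_0,a_0,a)\rho(a,h,b)=\rho(a_0,a_0h,b)=\rho(a_0,k,b)$ where $\omega(k)=a_0Bb$, and notes that any $f=\rho(a_0,u,b)$ has $u\in a_0Bb$, hence $u\le k$. What your approach buys is uniformity and generality: it never uses the explicit form of morphisms in $\lb$, so it works in any normal category whose hom-sets carry a partial order compatible with composition in the sense of Proposition \ref{compatibility}. What the paper's computation buys is concreteness: it exhibits the maximum of $[Ba_0,Bb]$ explicitly as $\rho(a_0,k,b)$ with $\omega(k)=a_0Bb$, anticipating the exact identification of maxima given later in Proposition \ref{exact:max}.
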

\begin{proof}
Let $f \in  [Ba_0,Bb]$. Then $f=\rho(a_0,u,b)$ for some $u \le k$ such that
$\omega(k)=a_0Bb$. Now $$jm =\rho(a_0,a_0,a)\rho(a,h,b) = \rho(a_0,a_0h,b) = \rho(a_0,k,b)$$
where $\omega(h) = aBb$. Therefore $f \le jm$ so that $jm$ is maximum element in $[Ba_0,Bb]$.
\end{proof}

\begin{prop}\label{retraction}
	Let $m = m(a, b)$ be the maximum element in the morphism set $[Ba, Bb]$ in $\lb$ and $q:b\to b_0$ be a retraction in $\lb$. Then
	$mq$ is the maximum element in the morphism set $[Ba, Bb_0]$.
\end{prop}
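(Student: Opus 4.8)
The plan is to mirror the proof of Proposition~\ref{iso-max}, where the essential ingredient was that an isomorphism $\alpha$ possesses a two-sided inverse. A retraction $q\colon Bb\to Bb_0$ is not invertible, but by definition it is a right inverse of an inclusion; since $q$ goes from $Bb$ to $Bb_0$, the inclusion in question is $j=j(b_0,b)\colon Bb_0\to Bb$, and it satisfies $jq=1_{Bb_0}$. This one-sided inverse, used on the appropriate side, will take over the role played by $\alpha^{-1}$ in the isomorphism case.

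First I would take an arbitrary $f\in[Ba,Bb_0]$ and seek to show $f\le mq$. To use the maximality of $m$, which lives in $[Ba,Bb]$, I would lift $f$ by pre-composing with the inclusion $j$, forming $fj\colon Ba\to Bb$, so that $fj\in[Ba,Bb]$. Since $m$ is the maximum of $[Ba,Bb]$, we get $fj\le m$. Then I would invoke the right-compatibility of the order (the first relation in Proposition~\ref{compatibility}), composing both sides on the right with $q\in[Bb,Bb_0]$ to obtain $fjq\le mq$. Because $jq=1_{Bb_0}$, the left side collapses to $fjq=f$, giving $f\le mq$. As $mq$ itself lies in $[Ba,Bb_0]$ and $f$ was arbitrary, this shows $mq$ is the maximum of $[Ba,Bb_0]$.

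The only subtlety, and the single point where the argument genuinely departs from the isomorphism case, is the choice of side. Since $q$ has only a one-sided inverse, I must pre-compose $f$ with $j$ to rise into $[Ba,Bb]$ and then post-compose with $q$ to come back down, so that the identity appearing is $jq=1_{Bb_0}$ rather than $qj$; it is precisely this that lets $fjq$ reduce to $f$. Beyond getting the sides straight, I anticipate no real obstacle: the existence of the inclusion $j$ paired with $q$ and the relation $jq=1_{Bb_0}$ are immediate from the definition of a retraction.
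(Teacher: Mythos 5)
Your proof is correct and is essentially identical to the paper's: the paper likewise lifts $f\in[Ba,Bb_0]$ to $fj(b_0,b)\in[Ba,Bb]$, applies maximality of $m$ and right-compatibility from Proposition~\ref{compatibility}, and collapses $fj(b_0,b)q$ to $f$ via $j(b_0,b)q=1_{Bb_0}$. Your discussion of why the one-sided inverse forces this particular arrangement of $j$ and $q$ is a sound observation, matching the paper's implicit use of the same fact.
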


\begin{proof}
	Let $f\in[Ba,Bb_0]$. Then $fj(b_0,b)$ is in $[Ba,Bb]$. Since $m$ is the  maximum element in $[Ba, Bb]$ we have $fj(b_0,b)\le  m$. Now by compatibility given by Proposition \ref{compatibility}
	$$f = fj(b_0,b)q \le  mq.$$
	So $mq$ is the maximum in $[Ba,Bb_0]$.
\end{proof}

As a consequence of this propositions we can see that $mf^\circ$ is always a maximum if $m$ is a maximum.
\begin{prop}
	Let $m = m(a,b)$ be the maximum element in the morphism set $[Ba,Bb]$ in $\lb$ and $f:b \to c$ be any morphism in $\lb$. Then
	$mf^\circ$ is the maximum element in the morphism set $[Ba,Bc_0]$ where $c_0 \le c$	is the codomain of $f^\circ$.
\end{prop}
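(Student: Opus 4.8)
The plan is to reduce this proposition to the two special cases already established, namely Proposition \ref{iso-max} (for isomorphisms) and Proposition \ref{retraction} (for retractions), by exploiting the normal factorization of $f$. The point is that the epimorphic component $f^\circ$ decomposes as a retraction followed by an isomorphism, and each of these steps has already been shown to preserve maxima.

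First I would invoke the normal factorization property, together with the uniqueness of the epimorphic component recalled in the preliminaries, to write $f = quj$, where $q$ is a retraction, $u$ is an isomorphism and $j$ is an inclusion, so that $f^\circ = qu$. Here $q : Bb \to Bb'$ for some object $Bb'$ with $b' \le b$, the isomorphism $u : Bb' \to Bc_0$ carries $Bb'$ onto the image $Bc_0 = \operatorname{im} f$, and the inclusion $j : Bc_0 \to Bc$ records that $c_0 \le c$. In particular the codomain of $f^\circ$ is exactly the object $Bc_0$ named in the statement.

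Next, using associativity of composition in $\lb$, I would rewrite $m f^\circ = m(qu) = (mq)u$. Applying Proposition \ref{retraction} to the retraction $q : Bb \to Bb'$ shows that $mq$ is the maximum element of the morphism set $[Ba, Bb']$. Applying Proposition \ref{iso-max} to this maximum element $mq$ and the isomorphism $u : Bb' \to Bc_0$ then shows that $(mq)u = m f^\circ$ is the maximum element of $[Ba, Bc_0]$, which is the desired conclusion.

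The argument is essentially bookkeeping, so the only point requiring care---the main (mild) obstacle---is to verify that the domains and codomains line up so that the two preceding propositions genuinely apply: one must confirm that the intermediate object $Bb'$ is simultaneously the codomain of $q$ and the domain of $u$, and that the codomain of $u$ is precisely the image $Bc_0$. These facts follow from the uniqueness of normal factorization in $\lb$ and from the definitions of retraction and epimorphic component recalled earlier, after which no further computation with the band multiplication is needed.
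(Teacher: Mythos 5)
Your proposal is correct and follows exactly the paper's own argument: factor $f = quj$, note $f^\circ = qu$, and apply Proposition \ref{retraction} and then Proposition \ref{iso-max} to conclude $mf^\circ = (mq)u$ is the maximum in $[Ba,Bc_0]$. The only difference is that you spell out the domain/codomain bookkeeping that the paper leaves implicit.
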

\begin{proof}
	Suppose $f=\varrho u j$, where $\varrho$ is a retraction, $u$ is an isomorphism and $j$ is an inclusion. Then by the above propositions \ref{iso-max} and \ref{retraction} we can say that 	$mf^\circ=m\varrho u$ is the maximum in $[Ba,Bc_0]$
\end{proof}

The following proposition  exactly locates the maximum element in each hom set in $\lb$
\begin{prop}\label{exact:max}
		The maximum element in $[Ba,Bb]$  is $\rho(a,ab,b)$
\end{prop}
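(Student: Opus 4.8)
The plan is to show directly that $\rho(a,ab,b)$ is a well-defined morphism in $[Ba,Bb]$ which dominates every other morphism of that set; by the existence proposition this already-located maximum must then be exactly $\rho(a,ab,b)$. First I would check that $\rho(a,ab,b)$ is genuinely a morphism, i.e. that $ab$ lies in the sandwich set $aBb$ (recall morphisms $\rho(a,u,b)$ require $u\in aBb$). This is immediate: taking the middle factor to be $b$ gives $a\,b\,b=ab$, so $ab\in aBb$ and hence $\rho(a,ab,b)\in[Ba,Bb]$.

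The heart of the argument is to show, for an arbitrary $\rho(a,v,b)\in[Ba,Bb]$, that $\rho(a,v,b)\le\rho(a,ab,b)$, which by definition of the order means $v\le ab$ in the natural partial order, i.e. $v(ab)=(ab)v=v$. Writing $v=axb$ for some $x\in B$ (possible since $v\in aBb$), the two products become $v(ab)=axbab$ and $(ab)v=abaxb$, and both collapse to $axb=v$ using only idempotency together with the defining identity $abcd=acbd$ of a normal band from Theorem \ref{structure-nband}(3): in each case one rearranges a block of four adjacent factors so that a repeated generator becomes adjacent to itself and is absorbed. This yields $v\le ab$ for every $v\in aBb$, and since $ab$ itself belongs to $aBb$, it follows that $\rho(a,ab,b)$ is the maximum of $[Ba,Bb]$.

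I expect the only real work to be this short normal-band computation verifying $v(ab)=(ab)v=v$; there is no conceptual obstacle, the mild challenge being simply to bracket the four-factor subproducts correctly so that $abcd=acbd$ applies before cancelling squares. As a more structural alternative one can bypass the computation entirely: by Theorem \ref{structure-nband}(2)(c) we have $ab=(a\phi_{\alpha,\alpha\beta})(b\phi_{\beta,\alpha\beta})\in E_{\alpha\beta}$, and since we just saw $ab\in aBb$, the element $ab$ is an element of $E_{\alpha\beta}$ whose associated triple is a morphism of $[Ba,Bb]$. The existence proposition asserts that such an element is unique and is precisely the middle term $u$ of the maximum, so $ab=u$ and the maximum is $\rho(a,ab,b)$.
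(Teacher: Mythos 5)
Your main argument is correct and is essentially the paper's own proof: the paper likewise takes an arbitrary $\rho(a,u,b)\in[Ba,Bb]$, writes $u=aub$ (your $v=axb$), and uses idempotency together with normality to show $(ab)u=u(ab)=u$, i.e.\ $u\le ab$, so that $\rho(a,ab,b)$ dominates everything in $[Ba,Bb]$. Your structural alternative via the uniqueness of the element of $E_{\alpha\beta}$ in $aBb$ (from the proof of the existence proposition) is a valid shortcut the paper does not take, but the computational route you lead with is exactly the published one.
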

\begin{proof}
Suppose $\rho(a,u,b)\in [Ba,Bb]$. Then $u\in aBb$ and hence $u=aub$. Now
$$(ab)(u)=(ab)(aub)=aabub=aubb=aub=u $$ Similarly we get  $$(u)(ab)=aubab=auab=aub=u$$ That is  $u\le ab$ for all $u\in aBb$ and this proves that $$ \rho(a,u,b)\le \rho(a,ab,b)$$ for all $\rho(a,u,b)\in [Ba,Bb]$.
\end{proof} 
 Note that $\rho(a,ab,b)$ is the unique isomorphism in $[Ba,Bb]$ whenever $Ba$ and $Bb$ are isomorphic. Also the following theorem shows that these morphisms are the building blocks of principal cones in $\lb$. 
\begin{thm}
The component of the principal cone $\rho^a$  at $Bb\in \lb$ is the maximum element in $[Bb,Ba]$.  
\end{thm}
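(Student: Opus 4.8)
The plan is to identify the component $\rho^a(Bb)$ explicitly as a canonical morphism of the form $\rho(b,w,a)$, and then to read off $w$ using Proposition \ref{exact:max}. Recall that $\rho^a$ is the principal cone induced by the right translation $\rho_a\colon x\mapsto xa$, so by the very definition of a principal cone its component at $Bb$ is the restriction of $\rho_a$ to the principal left ideal $Bb$, regarded as a morphism $Bb\to Ba$. Thus the whole task reduces to rewriting this restriction in the standard form $\rho(b,w,a)$ with $w\in bBa$, after which the result is an immediate appeal to the exact location of the maximum.

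To pin down $w$, I would use the basic fact that every $x\in Bb$ satisfies $x=xb$ (since $x=yb$ for some $y$ forces $xb=ybb=yb=x$). Hence for $x\in Bb$ the translation acts by $\rho_a(x)=xa=(xb)a=x(ba)$, which is precisely the action $x\mapsto x(ba)$ of the morphism $\rho(b,ba,a)$. It then remains only to confirm that $ba$ is an admissible middle index, i.e. that $ba\in bBa$; this follows from $ba=b(ba)=bba\in bBa$ together with $ba=(ba)a$ using $a^2=a$. Therefore the component is exactly $\rho^a(Bb)=\rho(b,ba,a)$.

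Finally I would invoke Proposition \ref{exact:max}, which states that the maximum element of $[Ba,Bb]$ is $\rho(a,ab,b)$; applied with the roles of $a$ and $b$ interchanged, the maximum of $[Bb,Ba]$ is $\rho(b,ba,a)$. Combining this with the previous paragraph yields $\rho^a(Bb)=\rho(b,ba,a)=m(b,a)$, the maximum in $[Bb,Ba]$, as claimed. The only delicate point I anticipate is the bookkeeping in the second paragraph that identifies the restricted translation with the canonical morphism $\rho(b,ba,a)$ and checks $ba\in bBa$; everything else is routine. As a consistency check, at the vertex itself the formula gives $\rho^a(Ba)=\rho(a,aa,a)=\rho(a,a,a)=1_{Ba}$, in agreement with the cone axiom $\gamma(a)=1_a$.
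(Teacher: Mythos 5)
Your proposal is correct and follows essentially the same route as the paper: identify the component as $\rho^a(Bb)=\rho(b,ba,a)$ and then invoke Proposition \ref{exact:max} (with the roles of $a$ and $b$ interchanged). The only difference is that you derive this identification from the definition of the right translation $\rho_a$ restricted to $Bb$, whereas the paper simply quotes it as the definition of the principal cone; this is a harmless (and indeed careful) elaboration, not a different argument.
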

\begin{proof}
	We have the principal cone $\rho^a$ with vertex $Ba$ is defined as
	$$\rho^a(Bb)=\rho(b,ba,a)\text{ for all }Bb\in v\lb $$
	
	But $\rho(b,ba,a)$ is the maximum in $[Bb,Ba]$ by proposition \ref{exact:max}. Hence the theorem.
\end{proof}
Now we give another characterization of the partial order in the hom sets of $\lb$ using the image of the morphisms.
\begin{thm}
	For any two morphisms $\rho(a,u,b),\rho(a,v,h)\in[Ba,Bb]$
	$$\rho(a,u,b)\le \rho(a,v,b)\text{ if and only if }\text{im}\,\rho(a,u,b)\subseteq \text{im}, \rho(a,v,b)  $$
\end{thm}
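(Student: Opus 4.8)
The plan is to first compute the image object of each morphism, thereby turning the right-hand condition into an inclusion of principal left ideals, and then to reduce everything to the multiplicative structure of the sandwich set $aBb$.

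First I would determine $\text{im}\,\rho(a,u,b)$. For $\rho(a,u,b)\in[Ba,Bb]$ the element $u$ lies in $aBb$, so $au=u=ub$; in particular $u$ is an idempotent of the band $B$ with $Bu\subseteq Bb$, and the inclusion $j(Bu,Bb)$ is exactly $\rho(u,u,b)$. Since $\rho(a,u,u)\rho(u,u,b)=\rho(a,uu,b)=\rho(a,u,b)$ and $\rho(a,u,u)\colon Ba\to Bu$ carries $Ba$ onto $Bu$ (and is therefore epimorphic), this exhibits $\rho(a,u,b)=\rho(a,u,u)\,j(Bu,Bb)$ as a normal factorization. By the uniqueness of the normal factorization its epimorphic component is $\rho(a,u,u)$, whose codomain is $Bu$, so $\text{im}\,\rho(a,u,b)=Bu$ and likewise $\text{im}\,\rho(a,v,b)=Bv$. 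Hence the condition $\text{im}\,\rho(a,u,b)\subseteq\text{im}\,\rho(a,v,b)$ is precisely the set inclusion $Bu\subseteq Bv$ of objects of $\lb$.

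Next I would translate both sides into equations in $B$. Because $u$ is idempotent, $Bu\subseteq Bv$ is equivalent to $u\in Bv$, which in turn is equivalent to $uv=u$. On the other side, by the definition of the order on $[Ba,Bb]$, the relation $\rho(a,u,b)\le\rho(a,v,b)$ means $u\le v$ in the natural partial order of $B$, i.e. $uv=vu=u$. The forward implication of the theorem is then immediate, since $u\le v$ gives $uv=u$ and hence $Bu\subseteq Bv$. Everything therefore reduces to proving the converse, namely that for $u,v\in aBb$ the single equation $uv=u$ already forces $vu=u$.

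The crux is thus to show that $aBb$ is commutative. Writing $u=aub$ and $v=avb$, we have $uv=aubavb$ and $vu=avbaub$, and applying the normal band identity $xyzw=xzyw$ from Theorem \ref{structure-nband} repeatedly to transpose adjacent interior letters transforms $aubavb$ into $avbaub$, giving $uv=vu$; alternatively, the strong-semilattice description of $B$ shows that each rectangular band $E_\gamma$ with $\gamma\le\alpha\beta$ meets $aBb$ in a single element and that these elements multiply as in the semilattice $\Gamma$, so $aBb$ is a semilattice. With commutativity in hand, $uv=u$ yields $vu=uv=u$, so $u\le v$, which is the required converse.

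The step I expect to be the main obstacle is the commutativity of $aBb$: the repeated use of $xyzw=xzyw$ must be organised so that every transposition acts on genuinely interior letters (each flanked by a letter on both sides, as the identity demands), and one should check that the absorbing relations $au=u=ub$ are applied consistently. By contrast, the identification $\text{im}\,\rho(a,u,b)=Bu$ and the passage from $Bu\subseteq Bv$ to $uv=u$ are routine.
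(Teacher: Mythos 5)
Your proposal is correct, and its skeleton matches the paper's: identify the images as $Bu$ and $Bv$, then prove $Bu\subseteq Bv$ if and only if $u\le v$, the forward direction being immediate from $uv=u$. The genuine divergence is in the converse. The paper argues by a direct computation: writing $u=ar=pb$ and $v=as=qb$ (using $u,v\in aB\cap Bb$), it expands $vu=(asqb)(arpb)$, permutes the interior letters by normality to obtain $(aras)(pbqb)$, and then uses $uv=u$ to collapse this to $(ar)(pb)=u^2=u$. You instead prove the stronger structural fact that the sandwich set $aBb$ is commutative: since $u=aub$ and $v=avb$, the products $uv=a(ubav)b$ and $vu=a(vbau)b$ differ only by a permutation of interior letters, hence coincide by repeated application of $xyzw=xzyw$. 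This is a cleaner route: it makes the equivalence of $uv=u$, $vu=u$ and $u\le v$ transparent, it is reusable (it exhibits $aBb$ as a semilattice, which also illuminates why $[Ba,Bb]$ has a maximum, namely $\rho(a,ab,b)$), and it subsumes the paper's eight-letter manipulation, which is in essence the same rearrangement trick performed without naming the underlying fact. Your alternative justification via the strong semilattice decomposition (each $E_\gamma$ with $\gamma\le\alpha\beta$ meets $aBb$ in exactly one point) is also sound, though unnecessary once the rearrangement argument is in place.

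One small point to tighten: you treat $\rho(a,u,b)=\rho(a,u,u)\,j(Bu,Bb)$ as a normal factorization on the grounds that $\rho(a,u,u)$ is surjective. The uniqueness proposition applies to factorizations of the form retraction--isomorphism--inclusion, so strictly you must either cite the known unique normal factorization $\rho(a,u,b)=\rho(a,g,g)\rho(g,u,h)\rho(h,h,b)$ with $u\,\gr\,g\le a$ and $u\,\gl\,h\le b$ (as the paper does, giving image $Bh=Bu$), or observe that $\rho(a,u,u)$ itself decomposes as the retraction $\rho(a,ua,ua)$ followed by the isomorphism $\rho(ua,u,u)$ --- here $ua\le a$ and $ua\,\gr\,u$ since $au=u=ub$ --- so that your factorization really is normal. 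With that noted, $\mathrm{im}\,\rho(a,u,b)=Bu$ as you claim, and the rest of your proof stands.
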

\begin{proof}
	To prove the theorem it is enough to show the following 
	$$ \text{im}\,\rho(a,u,b)\subseteq \text{im}, \rho(a,v,b) \text{ if and only if }u\le v$$
	
	The unique normal factorization\cite{pcscatnormal2019} of $\rho(a,u,b)$ is given by
	$$\rho(a,u,b)=\rho(a,g,g)\rho(g,u,h)\rho(h,h,b).$$
	Where $u\gr g\le a$ and $u\gl h\le b$. Therefore $\text{im}\, \rho(a,u,b)=Bh=Bu$. Similarly we  have $\text{im}\, \rho(a,v,b)=Bv$. 
	
	Assume that  $u\le v$. Then $uv=vu=u$. This implies $Bu=Buv\subseteq Bv$. Thus $\text{im}\,\rho(a,u,b)\subseteq \text{im}\, \rho(a,v,b)$. 
	
	Conversely suppose that $ \text{im}\,\rho(a,u,b)\subseteq \text{im}, \rho(a,v,b)$. That is $Bu\subseteq Bv$. Then $uv=v$. 
	
	Note that $u,v\in aBb=aB\cap Bb$. Therefore there exists $p,q,r,s\in B$ such that $u=ar=pb$ and $v=as=qb$. Also $u=u^2=arpb$ and $v=v^2=asqb$. Then $uv=u$ implies $arqb=ar$ and $pbqb=pb$. Now we get 
	\begin{align*}
	vu&=(asqb)(arpb)\\
	&=(aras)(pbqb)&\text{By repeted application of normality}\\
	&=(ar)(pb)\\
	&=uu\\&=u
	\end{align*} 
	Therefore $ \text{im}\,\rho(a,u,b)\subseteq \text{im}, \rho(a,v,b)$ implies $uv=vu=u$ and hence $u\le v$. Hence the theorem. 
	
\end{proof}

\end{document}